\title{On the weighted Gauß--Radau Quadrature}
\author{Sascha Trostorff \& Marcus Waurick\footnote{Institut f\"ur Analysis, TU Dresden, Dresden, Germany, \mbox{e-mail}: sascha.trostorff@tu-dresden.de \&
    Department of Mathematical Sciences, University of Bath,
    Bath, UK.
    \mbox{e-mail}: m.waurick@bath.ac.uk}
}
\date{\today}
\newcommand{\dd}{\,\mathrm{d}}
\newcommand{\N}{\mathbb{N}}
\newcommand{\R}{\mathbb{R}}
\theoremstyle{plain}
\newtheorem{theorem}{Theorem}[section]
\newtheorem{corollary}[theorem]{Corollary}
\theoremstyle{definition}
\newtheorem*{deff}{Definition}
\begin{document}
  \pagestyle{fancy}
  \maketitle
  \begin{abstract}
    In this short note, we collect some facts on the weighted Gauß-Radau quadrature. In particular, we focus on the location of the Gau\ss--Radau points being a continuous function of the $L^1$-weighting function.
  \end{abstract}
  
  \textit{AMS subject classification (2010):} 65D32, 65D30, 65D05
  
  \textit{Key words:} numerical quadrature,
                      weighted integrals, Gau\ss--Radau points

\section{On the Gau\ss--Radau Quadrature}\label{s:app}
  
  In this technical paper we shall gather some results on the right-sided Gau\ss--Radau 
  quadrature, which are well-known in principle, but are collected for the convenience 
  of the reader, in particular we refer to \cite{FTW16}. For this, we 
  introduce a set of weighting functions:
  \[
    W \coloneqq \{ w\in L^1(-1,1): w>0 \text{ a.e.}\}.
  \]
  Note that the bilinear form
  \[
    \langle \cdot,\cdot\rangle_w \colon D\times D \ni (f,g)\mapsto \int_{-1}^1 f(x)g(x)w(x)\dd x
  \]
  introduces a scalar product on its natural domain
  \[
    D\coloneqq \{ f\in L^1_{\mathrm{loc}}(-1,1); \int_{(-1,1)} |f(x)|^2 w(x) \dd x < \infty \}.   
  \] 
  Throughout, let $q\in \N$.
  \begin{deff} 
    Let $w\in W$. A pair $(\omega,r)=((\omega_j)_{j\in\{0,\ldots,q\}},(r_j)_{j\in\{0,\ldots,q\}})
    \in \R^{q+1}\times [-1,1]^{q+1}$ is called \emph{(right-sided) $w$-Gau\ss--Radau 
      quadrature (of order $q$)}, if $-1\leq r_0\leq r_1 \leq \cdots \leq r_q=1$ and 
    for all $p\in \mathcal{P}_{2q}(-1,1)$ we have
    \[
    \int_{-1}^1p(x)w(x)\dd x = \sum_{j=0}^q \omega_j p(r_j).
    \]
  \end{deff}
  \begin{theorem}\label{p:uni} 
    Let $w\in W$. Then there exists a unique (right-sided) $w$-Gau\ss--Radau 
    quadrature $(\omega,r)$ of order $q$. The weights $\omega$ satisfy 
    $0<\omega_j\leq \int_{(-1,1)} w(x) \, \dd x$ for $j\in \{0,\ldots,q\}$ and $r_0>-1$.
  \end{theorem}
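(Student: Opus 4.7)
The plan is to reduce the Radau quadrature to a standard (interior-node) Gauß quadrature with respect to the modified weight $\tilde w(x) \coloneqq (1-x) w(x)$, which still lies in $W$. I would set $r_0, \ldots, r_{q-1}$ to be the $q$ nodes of the Gauß quadrature of order $q-1$ for $\tilde w$ — equivalently, the zeros of the $q$-th monic polynomial $\pi$ orthogonal to $\mathcal{P}_{q-1}(-1,1)$ with respect to $\langle\cdot,\cdot\rangle_{\tilde w}$. By the classical theory of orthogonal polynomials on an interval, these zeros are simple and lie in $(-1,1)$, so in particular $-1 < r_0 < r_1 < \cdots < r_{q-1} < 1$. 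Setting $r_q = 1$ and defining $\omega_j \coloneqq \int_{-1}^{1} L_j(x) w(x)\,\dd x$ with $L_j$ the Lagrange basis at the $r_j$, existence follows from the division identity: every $p\in\mathcal{P}_{2q}$ may be written as $p(x) = (1-x)\pi(x)h(x) + s(x)$ with $h\in\mathcal{P}_{q-1}$ and $s\in\mathcal{P}_{q}$ the interpolant of $p$ at $r_0,\ldots,r_q$. Orthogonality of $\pi$ in $\langle\cdot,\cdot\rangle_{\tilde w}$ kills the first term of $\int p\,w$, and the second equals $\sum_j \omega_j s(r_j) = \sum_j \omega_j p(r_j)$.

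For uniqueness I would argue in reverse: if $(\omega,r)$ is any Radau quadrature, then for every $h\in\mathcal{P}_{q-1}$, the polynomial $(1-x)\tilde\pi(x)h(x) \in \mathcal{P}_{2q}$, where $\tilde\pi(x) = \prod_{j=0}^{q-1}(x-r_j)$, vanishes at every node. Hence the quadrature identity forces $\tilde\pi$ to be orthogonal to $\mathcal{P}_{q-1}$ with respect to $\tilde w$, so $\tilde\pi = \pi$, pinning down $r_0,\ldots,r_{q-1}$. That $r_q = 1$ is part of the definition, so the nodes are fixed, and then the weights are forced by applying the identity to the Lagrange basis, yielding the same $\omega_j$ as above.

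For positivity, inserting $p(x) = L_j(x)^2 \in \mathcal{P}_{2q}$ into the quadrature identity gives $\omega_j = \int L_j(x)^2 w(x)\,\dd x > 0$, since $w>0$ a.e.\ and $L_j \not\equiv 0$. The upper bound $\omega_j \le \int w$ is immediate from $p\equiv 1$, which yields $\sum_j \omega_j = \int w$, combined with positivity. For $r_0 > -1$ I would argue by contradiction: if $r_0 = -1$, consider
\[
    p(x) = (1+x)(1-x)\prod_{j=1}^{q-1}(x-r_j)^2 \in \mathcal{P}_{2q},
\]
which vanishes at every $r_j$ but is strictly positive on a set of positive measure in $(-1,1)$; then the quadrature identity would give $0 = \int p\,w > 0$, a contradiction.

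The main obstacles are verifying that the orthogonal polynomial $\pi$ for the modified weight really has $q$ simple roots in $(-1,1)$ (invoked above from the classical theory, which in turn only needs $\tilde w \in L^1$ with $\tilde w > 0$ a.e.) and setting up the division identity cleanly; once these are in place the remaining assertions are short polynomial insertions as indicated.
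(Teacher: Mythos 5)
Your proposal is correct and follows exactly the route the paper indicates: the paper gives no detailed proof but points to the classical Gau\ss{} quadrature theory and remarks that it transfers to the Radau case via the modified weight $\tilde w\colon x\mapsto (1-x)w(x)$, which is precisely the reduction you carry out. Your fleshed-out version (division identity $p=(1-x)\pi h+s$, orthogonality of the node polynomial for uniqueness, $L_j^2$ for positivity, $p\equiv 1$ for the upper bound, and the nonnegative degree-$2q$ polynomial for $r_0>-1$) is a sound execution of that plan.
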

  The result being well-known we only refer to \cite{L04} for the corresponding result for Gau\ss-quadrature rule. Note that the methods in \cite{L04} can straightforwardly be adopted to apply to the Gau\ss--Radau quadrature, if one considers the inner product $\langle \cdot,\cdot\rangle_{\tilde w}$ instead of $\langle \cdot,\cdot\rangle_w$, where for $w\in W$ we put $\tilde w \colon x\mapsto (1-x)w(x)$ 
  and observe $\tilde w \in W$.

  Next, we address the continuous dependence of the Gau\ss--Radau points on the 
  weighting function.
  \begin{theorem}\label{t:cd} The mapping
    \begin{equation}\label{e:cd}
      (W,\|\cdot\|_{L^1(-1,1)}) \to \R^{q+1}\times (-1,1]^{q+1}, w\mapsto (\omega(w),r(w))
    \end{equation}
    is continuous, where $(\omega(w),r(w))$ denotes the $w$-Gau\ss--Radau quadrature.   
  \end{theorem}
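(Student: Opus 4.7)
The plan is a compactness-and-uniqueness argument powered by Theorem~\ref{p:uni}: first to produce uniform a~priori bounds allowing extraction of convergent subsequences, then to identify each subsequential limit as the unique $w$-Gauß-Radau quadrature.

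Concretely, I would fix $w \in W$ and take a sequence $(w_n)$ in $W$ converging to $w$ in $L^1(-1,1)$. The nodes $r(w_n)$ automatically lie in the compact cube $[-1,1]^{q+1}$, and Theorem~\ref{p:uni} bounds the weights by $\|w_n\|_{L^1}$, which stays bounded thanks to $L^1$-convergence of $w_n$. Hence $(\omega(w_n), r(w_n))$ is precompact in $\R^{q+1}\times\R^{q+1}$, and by the subsequence principle it suffices to show that any limit point $(\omega^*, r^*)$ coincides with $(\omega(w), r(w))$. For every $p\in\mathcal{P}_{2q}(-1,1)$ the crucial step is to pass to the limit in the defining identity
\[
  \int_{-1}^1 p(x) w_n(x) \dd x = \sum_{j=0}^q \omega_j(w_n) p(r_j(w_n));
\]
the left-hand side converges because $p$ is bounded on $[-1,1]$ and $w_n\to w$ in $L^1$, and the right-hand side by continuity. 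The closed conditions $-1\leq r_0^*\leq \cdots \leq r_q^*=1$ survive the limit, so $(\omega^*, r^*)$ is a $w$-Gauß-Radau quadrature of order $q$; the uniqueness assertion of Theorem~\ref{p:uni} then forces $(\omega^*, r^*) = (\omega(w), r(w))$, and the same theorem guarantees $r_0(w) > -1$, placing the limit in the stated codomain $\R^{q+1}\times (-1,1]^{q+1}$.

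The only nontrivial ingredient I anticipate is the uniform bound on the weights needed for compactness, which is exactly what the estimate $\omega_j\leq \int w$ in Theorem~\ref{p:uni} delivers; after that, nothing but routine limit passage remains.
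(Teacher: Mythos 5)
Your proposal is correct and follows essentially the same route as the paper's proof: uniform bounds on the weights from Theorem~\ref{p:uni} and the trivial bound on the nodes give precompactness, a subsequence is extracted, the quadrature identity is passed to the limit for each $p\in\mathcal{P}_{2q}(-1,1)$, and uniqueness identifies the limit as $(\omega(w),r(w))$. Your explicit remarks that the ordering constraints are closed and that $r_0(w)>-1$ places the limit in the stated codomain are slightly more detailed than the paper's write-up, but the argument is the same.
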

  \begin{proof}
    Let $(w_n)_{n\in \N}$ be a sequence in $W$ and $w\in W$ such that 
    $w_n\to w$ in $L^1(-1,1)$. By definition and by Theorem \ref{p:uni}, 
    \[
    (\omega(w_n),r(w_n))\in [0,\sup_{k\in \N} 
    \|w_k\|_{L^1(-1,1)}]^{q+1}\times [-1,1]^{q+1}   
    \]
    for all $n\in \N$. Thus, there exists a convergent subsequence for which we 
    re-use the name with limit $(\overline{\omega},\overline{r})$. Note that 
    $\overline{r}_q=1$. Next, let $p\in \mathcal{P}_{2q}(-1,1)$. Then, for $n\in 
    \N$, we obtain
    \begin{align*}
      \int_{-1}^1 p w &= \lim_{n\to \infty} \int_{-1}^1 p w_n\\ 
      & =\lim_{n\to \infty} \sum_{j=0}^q \omega(w_n)_j p(r(w_n)_j)\\ 
      &= \sum_{j=0}^q \overline{\omega}_j p(\overline{r}_j).
    \end{align*}
    Hence, by Theorem \ref{p:uni}, we infer 
    $(\overline{\omega},\overline{r})=(\omega(w),r(w))$, which eventually implies 
    the assertion.
  \end{proof}
  
  \begin{corollary}\label{c:chi1} 
    For $\tau\in \R$ denote $w_\tau\colon x\mapsto \exp(-\rho \tau(x+1))(\in W)$ 
    and let $(\omega^{(\tau)},r^{(\tau)})$ be the $w_\tau$-Gau\ss--Radau quadrature. 
    For $\tau\in\R$, let $\chi_\tau\in \mathcal{P}_{q+1}(-1,1)$ such that
    \[
    \chi_\tau(r^{(\tau)}_j)=0\quad(j\in\{0,\ldots,q\}),\, \chi_\tau(-1)=1.
    \]
    Then, for every compact set $K\subset \R$, we have
    \[
    \sup_{\tau\in K} \int_{-1}^1 \chi_\tau^2w_\tau <\infty.
    \]  
  \end{corollary}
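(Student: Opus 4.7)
The plan is to show that $\tau \mapsto \int_{-1}^1 \chi_\tau^2 w_\tau$ is a continuous real-valued function on $\R$; the claim then follows immediately from compactness of $K$. The main ingredient is Theorem \ref{t:cd}. As a preliminary step, I would verify that $\tau \mapsto w_\tau$ is continuous from $\R$ into $L^1(-1,1)$: on any bounded parameter set, the integrands $x \mapsto \exp(-\rho\tau(x+1))$ are uniformly bounded and pointwise continuous in $\tau$, so dominated convergence applies. Composing with the map from Theorem \ref{t:cd} shows that $\tau \mapsto r^{(\tau)}$ is continuous from $\R$ into $(-1,1]^{q+1}$.

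Next, I would use the closed-form representation
\[
\chi_\tau(x) = \prod_{j=0}^q \frac{x - r^{(\tau)}_j}{-1 - r^{(\tau)}_j},
\]
which is forced by the prescribed zeros and the normalization $\chi_\tau(-1)=1$. This formula is legitimate because Theorem \ref{p:uni} guarantees $r^{(\tau)}_j > -1$ for every $j$ and every $\tau \in \R$. Combining the continuity of $\tau \mapsto r^{(\tau)}$ with compactness of $K$, the strictly positive continuous function $\tau \mapsto r^{(\tau)}_j+1$ attains a positive minimum on $K$, so each factor of $\chi_\tau(x)$ is bounded uniformly over $(\tau,x) \in K \times [-1,1]$. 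In particular,
\[
M \coloneqq \sup_{\tau\in K}\ \sup_{x\in[-1,1]} |\chi_\tau(x)| < \infty.
\]

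The conclusion then follows from the crude estimate
\[
\int_{-1}^1 \chi_\tau^2(x) w_\tau(x) \dd x \le M^2 \|w_\tau\|_{L^1(-1,1)},
\]
since the $L^1$-continuity of $\tau \mapsto w_\tau$ combined with compactness of $K$ yields $\sup_{\tau \in K}\|w_\tau\|_{L^1(-1,1)} < \infty$. The only step requiring genuine care is the uniform separation of the Gau\ss--Radau points $r^{(\tau)}_j$ from $-1$ as $\tau$ ranges over $K$; this is precisely the place where both Theorem \ref{p:uni} (pointwise strict inequality $r_0^{(\tau)}>-1$) and Theorem \ref{t:cd} (continuous dependence on $\tau$) are used essentially, and without it the product representation of $\chi_\tau$ could blow up.
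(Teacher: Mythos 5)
Your proof is correct, but it follows a genuinely different route from the paper's. The paper argues by contradiction: it takes $\tau_n\to\tau$ with $\int_{-1}^1\chi_{\tau_n}^2 w_{\tau_n}\to\infty$, discards the factor $\bigl(\tfrac{x-1}{-2}\bigr)^2\le 1$ coming from the node $r_q=1$ to reduce the integrand to a polynomial of degree $2q$, evaluates that integral \emph{exactly} via the quadrature rule itself (the sum collapses to the single term $\ell=q$, since every other term contains a vanishing factor), and then passes to the limit using Theorem \ref{t:cd} to reach a finite value --- a contradiction. You instead bound $\|\chi_\tau\|_{\infty}$ uniformly over $\tau\in K$ by combining the strict inequality $r_j^{(\tau)}>-1$ from Theorem \ref{p:uni} with the continuity of $\tau\mapsto r^{(\tau)}$ (via Theorem \ref{t:cd} and the easily checked $L^1$-continuity of $\tau\mapsto w_\tau$), so that $1+r_j^{(\tau)}$ is bounded below by a positive constant on $K$, and then finish with the crude estimate $\int\chi_\tau^2 w_\tau\le M^2\|w_\tau\|_{L^1}$. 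Your argument is more elementary and direct --- it never invokes the exactness of the quadrature on $\mathcal{P}_{2q}$ and produces an explicit uniform bound rather than a contradiction --- while the paper's approach yields the sharper identity that the (degree-reduced) integral equals $\omega_q^{(\tau_n)}\prod_{j=0}^{q-1}\bigl(\tfrac{1-r_j^{(\tau_n)}}{1+r_j^{(\tau_n)}}\bigr)^2$. Both hinge on the same two essential facts, namely Theorem \ref{t:cd} and the separation of the nodes from $-1$, and you correctly identify that the latter is the only delicate point.
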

  \begin{proof}
    Assume by contradiction that there exists $(\tau_n)_n$ convergent to some 
    $\tau$ with the property
    \[
    \int_{-1}^1 \chi_{\tau_n}^2w_{\tau_n}\to \infty.
    \]
    Using Theorem \ref{t:cd}, we compute for $n\in \N$
    \begin{align*}
      \int_{-1}^1 \chi_{\tau_n}^2w_{\tau_n} 
      & = \int_{-1}^1 \prod_{j=0}^q 
      \Big(\frac{x-r^{(\tau_n)}_j}{-1-r^{(\tau_n)}_j}\Big)^2 w_{\tau_n}(x)\dd x\\ 
      & \leq \int_{-1}^1 \prod_{j=0}^{q-1} 
      \Big(\frac{x-r^{(\tau_n)}_j}{-1-r^{(\tau_n)}_j}\Big)^2 w_{\tau_n}(x)\dd x\\ 
      & = \sum_{\ell=0}^q \omega^{(\tau_n)}_\ell \prod_{j=0}^{q-1} 
      \Big(\frac{r^{(\tau_n)}_\ell-r^{(\tau_n)}_j}{1+r^{(\tau_n)}_j}\Big)^2\\ 
      & = \omega^{(\tau_n)}_q \prod_{j=0}^{q-1} 
      \Big(\frac{1-r^{(\tau_n)}_j}{1+r^{(\tau_n)}_j}\Big)^2\\ 
      & \to \omega^{(\tau)}_q \prod_{j=0}^{q-1} 
      \Big(\frac{1-r^{(\tau)}_j}{1+r^{(\tau)}_j}\Big)^2\quad(n\to\infty).
    \end{align*}
    But,
    \[
    0\leq \omega^{(\tau)}_q \prod_{j=0}^{q-1} 
    \Big(\frac{1-r^{(\tau)}_j}{1+r^{(\tau)}_j}\Big)^2
    =\int_{-1}^1 \prod_{j=0}^{q-1} 
    \Big(\frac{x-r^{(\tau)}_j}{-1-r^{(\tau)}_j}\Big)^2 w_{\tau}(x)\dd x
    <\infty,
    \]
    which contradicts the assumption.
  \end{proof}
  Let us introduce for a bounded interval $I\subseteq \R$ the mapping
  \begin{align*}
    \phi_I \colon (-1,1) &\to I,\\                   
    x&\mapsto \frac{a+b}{2}+\frac{b-a}{2}x,
  \end{align*}
  where $a\coloneqq \inf I$, $b\coloneqq \sup I$. Further, we set $|I|\coloneqq b-a$. 
  \begin{corollary}\label{c:chi2} 
    For $\tau\in \R$ let $\chi_\tau$ be as in Corollary \ref{c:chi1}. 
    Let $K\geq 0$. Then
    \[
    \sup_{I\subseteq \R\text{ interval}, |I|\leq K} 
    \frac{1}{|I|}\int_I \big(\chi_{|I|}(\phi_I^{-1}(t))\big)^2e^{-2\rho(t-\inf I)}\dd t
    <\infty.
    \]   
  \end{corollary}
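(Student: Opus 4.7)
The plan is to reduce the statement directly to Corollary \ref{c:chi1} via an affine change of variables. Given a bounded interval $I$ with $a=\inf I$, $b=\sup I$, and $|I|=b-a$, I would substitute $t=\phi_I(x)=\frac{a+b}{2}+\frac{b-a}{2}x$ in the integral. Then $\dd t = \frac{|I|}{2}\dd x$, $\phi_I^{-1}(t)=x$, and, crucially, the shift satisfies
\[
t-\inf I = \frac{|I|}{2}(x+1),
\]
so that the exponential weight transforms as
\[
e^{-2\rho(t-\inf I)} = e^{-\rho|I|(x+1)} = w_{|I|}(x),
\]
in the notation of Corollary \ref{c:chi1} with parameter $\tau=|I|$.

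Carrying out the substitution, the quantity to be bounded becomes
\[
\frac{1}{|I|}\int_I \bigl(\chi_{|I|}(\phi_I^{-1}(t))\bigr)^2 e^{-2\rho(t-\inf I)}\dd t = \frac{1}{|I|}\cdot \frac{|I|}{2}\int_{-1}^1 \chi_{|I|}(x)^2 w_{|I|}(x)\dd x = \frac{1}{2}\int_{-1}^1 \chi_{|I|}^2 w_{|I|},
\]
so the $|I|$-factor cancels neatly and the problem is recast in the standard interval $(-1,1)$.

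Since $|I|\in[0,K]$ whenever $|I|\leq K$, and $[0,K]\subset\R$ is compact, the supremum on the right over all such intervals is majorized by $\frac{1}{2}\sup_{\tau\in[0,K]}\int_{-1}^1 \chi_\tau^2 w_\tau$. Corollary \ref{c:chi1} then gives exactly the required finiteness, completing the argument. There is no real obstacle here beyond bookkeeping with the affine map: the whole content is the observation that the weight $e^{-2\rho(t-\inf I)}$ is precisely the transported version of $w_{|I|}$ under $\phi_I$, and the Jacobian $|I|/2$ exactly cancels the prefactor $1/|I|$.
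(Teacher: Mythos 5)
Your proof is correct and follows essentially the same route as the paper: the affine substitution $t=\phi_I(x)$ turns the weight into $w_{|I|}$, the Jacobian $|I|/2$ cancels the prefactor up to a factor $1/2$, and Corollary \ref{c:chi1} applied to the compact set $[0,K]$ finishes the argument. No issues.
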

  \begin{proof}
    For $I=(a,b)\subseteq \R$ we compute
    \begin{align*}
      &\frac{1}{b-a}\int_{a}^b \big(\chi_{|I|}(\phi_I^{-1}(t))\big)^2 e^{-2\rho(t-a)} \dd t \\
      &=\frac{1}{b-a}\int_{-1}^1 \big(\chi_{|I|}(x)\big)^2 e^{-2\rho(\phi_I(t)-a)}\phi_I'(x) \dd x\\ 
      &=\frac{1}{2}\int_{-1}^1 \big(\chi_{|I|}(x)\big)^2 e^{-\rho((b-a)(x+1))} \dd x.
    \end{align*}
    Hence, the assertion follows from Corollary \ref{c:chi1}.
  \end{proof}
  The next corollary is concerned with the lowest Gau\ss--Radau point for different weights:
  \begin{corollary}\label{c:t0} 
    For $\tau\in \R$ let $w_\tau$ and $(\omega^{(\tau)},r^{(\tau)})$ be given as in Corollary 
    \ref{c:chi1}. Let $T>0$. Then there exists $c>0$ such that for all intervals $I\subseteq \R$ 
    with $|I|\leq T$ and $0\leq\tau \leq T$ we have
    \[
    \phi_I(r^{(\tau)}_0)-\inf I \geq c |I|.
    \]   
  \end{corollary}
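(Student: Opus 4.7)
The plan is to reduce the statement to a uniform bound purely in $\tau$ by absorbing the $I$-dependence into the factor $|I|$, and then to apply Theorem \ref{t:cd} together with a standard compactness argument on $[0,T]$.

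First I would use the explicit form of $\phi_I$: for $I=(a,b)$ and $x\in(-1,1)$ one has
\[
\phi_I(x)-a \;=\; \frac{a+b}{2}-a+\frac{b-a}{2}x\;=\;\frac{|I|}{2}(1+x).
\]
Applied to $x=r^{(\tau)}_0$ this gives $\phi_I(r^{(\tau)}_0)-\inf I = \tfrac{|I|}{2}\bigl(1+r^{(\tau)}_0\bigr)$, which no longer depends on the interval $I$ apart from the overall factor $|I|$. Hence the assertion is equivalent to showing
\[
c_0\coloneqq \inf_{\tau\in[0,T]} \bigl(1+r^{(\tau)}_0\bigr) \;>\;0,
\]
after which we may take $c\coloneqq c_0/2$.

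Next I would argue that $\tau\mapsto r^{(\tau)}_0$ is continuous on $[0,T]$. For this it suffices to observe that the map $[0,T]\ni\tau\mapsto w_\tau\in L^1(-1,1)$ is continuous: for $\tau_n\to\tau$ in $[0,T]$ we have pointwise convergence $w_{\tau_n}(x)\to w_\tau(x)$ on $(-1,1)$ and the uniform bound $0\le w_{\tau_n}(x)\le 1$, so dominated convergence yields $\|w_{\tau_n}-w_\tau\|_{L^1(-1,1)}\to 0$. Composing with the continuous map from \eqref{e:cd} in Theorem \ref{t:cd} gives continuity of $\tau\mapsto(\omega^{(\tau)},r^{(\tau)})$, and in particular of the coordinate $\tau\mapsto r^{(\tau)}_0$.

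Finally, Theorem \ref{p:uni} yields $r^{(\tau)}_0>-1$ for every $\tau\in[0,T]$, so the continuous function $\tau\mapsto 1+r^{(\tau)}_0$ is strictly positive on the compact set $[0,T]$ and therefore attains a strictly positive minimum $c_0>0$. Combined with the reduction in the first step this gives $\phi_I(r^{(\tau)}_0)-\inf I\ge \tfrac{c_0}{2}|I|$, as required. No real obstacle arises; the argument is compactness together with the continuous dependence from Theorem \ref{t:cd}, the only routine check being the $L^1$-continuity of $\tau\mapsto w_\tau$, which is immediate by dominated convergence on the bounded interval $(-1,1)$.
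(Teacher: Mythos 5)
Your proof is correct and follows essentially the same route as the paper: continuity of $\tau\mapsto w_\tau$ in $L^1(-1,1)$, Theorem \ref{t:cd}, and compactness of $[0,T]$ to bound $1+r^{(\tau)}_0$ away from $0$, then the affine identity for $\phi_I$ to absorb the interval dependence into $|I|$. You are in fact slightly more careful than the paper on two minor points (the factor $\tfrac12$ in $\phi_I(x)-\phi_I(-1)=\tfrac{|I|}{2}(x+1)$, and the dominated-convergence justification of the $L^1$-continuity), so nothing further is needed.
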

  \begin{proof}
    We observe that \[\R\ni \tau \mapsto \big(t\mapsto e^{-\rho \tau(t+1)}\big)
    \in (W,\|\cdot\|_{L^1(-1,1)})\] is continuous. Hence, the set 
    \[
    \{ \big(t\mapsto e^{-\rho \tau(t+1)}\big); \tau\in [0,T]\}\subseteq (W,\|\cdot\|_{L^1(-1,1)})
    \]
    is compact. Thus, by the continuous dependence of the Gau\ss--Radau point on 
    the weighting function (see \eqref{e:cd}), we obtain that 
    \[
    \{ (\omega^{(\tau)},r^{(\tau)}); \tau\in [0,T]\}\subseteq \R^{q+1}\times (-1,1]^{q+1}
    \]
    is compact, as well. In particular, there exists $c>0$ with the property 
    $r^{(\tau)}_0-(-1)\geq c$. Hence, we obtain for all $\tau\in[0,T]$ and intervals 
    $I\subseteq \R$ with $|I|\leq T$
    \[
    \phi_I(r^{(\tau)}_0)-\inf I 
    = \phi_I(r^{(\tau)}_0)-\phi_I(-1) 
    = |I|(r^{(\tau)}_0-(-1)) 
    \geq c |I|.\qedhere
    \]
  \end{proof}
  
  \bibliographystyle{plain}

\end{document}